\numberwithin{equation}{section}
\numberwithin{figure}{section}
\theoremstyle{plain}
\newtheorem{thm}{Theorem}[section]
\newtheorem*{thm-nonum}{Theorem}
\newtheorem{lem}[thm]{Lemma}
\newtheorem{prp}[thm]{Proposition}
  \theoremstyle{definition}
  \newtheorem{dfn}[thm]{Definition}
    \newtheorem{exm}[thm]{Example}
  \newtheorem{rmk}[thm]{Remark}
  \theoremstyle{plain}
  \theoremstyle{plain}
\newcommand{\xyR}[1]{%
\xydef@\xymatrixrowsep@{#1}}
\newcommand{\xyC}[1]{%
\xydef@\xymatrixcolsep@{#1}}
\def\al{\alpha}
\def\be{\beta}
\def\ga{\gamma}
\def\de{\delta}
\def\ep{\varepsilon}
\def\et{\eta}
\def\la{\lambda}
\def\si{\sigma}
\def\De{\Delta}
\def\Ph{\Phi}
\def\Ker{\operatorname{Ker}}
\def\calC{{\mathcal C}}
\def\calD{{\mathcal D}}
\def\calM{{\mathcal M}}
\def\bbZ{{\mathbb Z}}
\def\bbP{{\mathbb P}}
\def\incl{\hookrightarrow}
\def\iso{\cong}
\def\ovl{\overline}
\def\Ds{\bigoplus}
\def\dsm#1,#2..#3{\bigoplus_{{#1}={#2}}^{#3}}
\def\sm#1,#2..#3{\sum_{{#1}={#2}}^{#3}}
\def\id{1\kern-.25em{\text{{\rm l}}}} 
\def\isoto{\ \raise.8ex\hbox{$^{\sim}$}\kern-.7em\hbox{$\to$}\ } 
\def\ang#1{{\langle #1 \rangle}}
\def\bg{%
\family{cmr}\size{20}{12pt}\selectfont}
\def\bigzerou{%
\smash{\lower1.7ex\hbox{\bg 0}}}
\def\repr[#1;#2;#3;#4;#5]{
\left(
\begin{matrix}#1\\#2\end{matrix}
#3
\begin{matrix}#4\\#5\end{matrix}
\right)}
\numberwithin{equation}{section}
\numberwithin{figure}{section}
\def\kCat{\Bbbk\text{-}\mathbf{Cat}}
\def\Gr{\operatorname{Gr}}
\def\k{\Bbbk}
\def\incl{\hookrightarrow}
\newtheorem{clm}{Claim}
\def\gr{\operatorname{Gr}}
\def\dis{\displaystyle}
\def\rar{\rightarrow}
\def\kCat{\Bbbk\text{-}\mathbf{Cat}}
\def\bp{\overline{\Ph}}
\def\ol{\overline}
\def\dia{\De (A)}
\def\tX{\tilde{X}}
\def\Rel{\operatorname{Rel}}
\begin{document}

\title{Presentations of Grothendieck constructions}
\author{Hideto Asashiba and Mayumi Kimura}

\address{
Department of Mathematics,
Faculty of Science,
Shizuoka University,
836 Ohya, Suruga-ku,
Shizuoka, 422-8529, Japan}

\keywords{Grothendieck construction, functors, quivers}

\thanks{This work is partially supported by Grant-in-Aid for Scientific Research
(C) 21540036 from JSPS}

\maketitle

\begin{abstract}
We will give quiver presentations of the Grothendieck constructions
of functors from a small category
to the 2-category of $\Bbbk$-categories for a commutative ring $\Bbbk$.
\end{abstract}

\section*{Introduction}
Throughout this paper $I$ is a small category, $\k$ is a commutative ring,
and $\kCat$ denotes the the 2-category of all $\k$-categories, $\k$-functors between them
and natural transformations between $\k$-functors.

The Grothendieck construction is a way to form a single category $\Gr(X)$ from
a diagram $X$ of small categories
indexed by a small category $I$, 
which first appeared in \cite[\S 8 of  Expos{\'e} VI]{SGA71}.
As is exposed by Tamaki \cite{Tam09}
this construction has been used as a useful tool in homotopy theory (e.g., \cite{Tho79}) or
topological combinatorics (e.g., \cite{WZZ99}).
This can be also regarded as a generalization of orbit category construction from
a category with a group action.

In \cite{Asa-a} we defined a notion of derived equivalences of (oplax) functors from $I$ to $\kCat$,
and in \cite{Asa-b} we have shown that if (oplax) functors $X, X' \colon I \to \kCat$
are derived equivalent,
then so are their Grothendieck constructions
$\Gr(X)$ and $\Gr(X')$.
An easy example of a derived equivalent pair of functors is given by using diagonal functors:
For a category $\calC$ define the {\em diagonal} functor $\De(\calC) \colon I \to \kCat$
to be a functor sending all objects of $I$ to $\calC$
and all morphisms in $I$ to the identity functor of $\calC$.
Then if categories $\calC$ and $\calC'$ are derived equivalent,
then so are their diagonal functors $\De (\calC )$ and $\De (\calC')$.
Therefore, to compute examples of derived equivalent pairs using this result,
it will be useful to present Grothendieck constructions of functors by quivers with relations.
We already have computations in two special cases.
First for a $\k$-algebra $A$, which we regard as a $\k$-category with a single object,
we noted in \cite{Asa-b} that
if $I$ is a semigroup $G$, a poset $S$,
or the free category $\bbP Q$ of a quiver $Q$,
then the Grothendieck construction $\gr (\De (A))$
of the diagonal functor $\De(A)$ is isomorphic to
the semigroup algebra $AG$,
the incidence algebra $AS$,
or the path-algebra $AQ$, respectively.
Second in \cite{Asa11} we gave a quiver presentation of the 
orbit category $\calC/G$ for each $\k$-category $\calC$ with an action
of a semigroup $G$ in the case that $\k$ is a field,
which can be seen as a computation of a quiver presentation 
of the Grothendieck construction $\Gr(X)$ of each functor $X\colon G \to \kCat$.

In this paper we generalize these two results as follows:
\begin{enumerate}
\item We compute the Grothendieck construction $\Gr(\De(A))$
of the diagonal functor $\De(A)$ for each $\k$-algebra $A$
and each small category $I$.
\item We give a quiver presentation of the Grothendieck construction $\Gr(X)$
for each functor $X \colon I \to \kCat$ and each small category $I$
when $\k$ is a field.
\end{enumerate}

In section 1 we give necessary definitions and recall the fact that all categories can
be presented by quivers and relations.
Sections 2 and 3 are devoted to the computation (1) and a quiver presentation (2)
above, respectively.
Finally in section 4 we give some examples.

\section{Preliminaries}

Throughout this paper
$Q=(Q_0,Q_1,t,h)$ is a quiver, where 
$t(\al) \in Q_0$ is the {\em tail} and $h(\al) \in Q_0$ is the {\em head} of each arrow $\al$ of $Q$.
For each path $\mu$ of $Q$,
the tail and the head of $\mu$ is denoted by $t(\mu)$ and $h(\mu)$, respectively.
For each non-negative integer $n$ 
the set of all paths of $Q$ of length at least $n$ is denoted by
$Q_{\ge n}$.
In particular $Q_{\ge 0}$ denotes the set of all paths of $Q$.

A category $\calC$ is called a {\em $\k$-category} if for each $x,y\in \calC$,
$\calC (x,y)$ is a $\k $-module and the compositions are $\k$-bilinear.

\begin{dfn}
Let $Q$ be a quiver.
\begin{enumerate}
\item
The {\em free} category $\bbP Q$ of $Q$ is the category whose underlying quiver is $(Q_0, Q_{\ge 0},t, h)$
with the usual composition of paths.
\item
The {\em path} $\k$-category of $Q$ is the $\k$-linearization
of $\bbP Q$ and is denoted by $\k Q$.
\end{enumerate}
\end{dfn}

\begin{dfn}
Let $\calC$ be a category.
We set
$$
\Rel(\calC):=\bigcup _{(i,j)\in \calC_0\times \calC_0}\calC(i,j)\times \calC(i,j),
$$
elements of which are called {\em relations} of $\calC$.
Let $R \subseteq \Rel(\calC)$.
For each $i,j\in \calC _0$ we set
$$
R(i,j):= R \cap (\calC(i,j)\times \calC(i,j)).
$$
\begin{enumerate}
\item
The smallest congruence relation
$$R^c:=\bigcup _{(i,j)\in \calC _0\times \calC _0}
\{(d a c,d b c)\mid c\in \calC (-,i),d\in \calC(j,-),
(a,b)\in R(i,j)\}$$
containing $R$ is called the {\em congruence relation} generated by $R$. 
\item
For each $i, j \in \calC_0$ we set
\begin{align*}
R^{-1}(i,j)&:=\{(g,f)\in \calC (i,j)\times \calC (i,j) \mid (f,g)\in R(i,j)\}\\
1_{\calC (i,j)}&:=\{(f,f)\mid f\in \calC (i,j)\}\\
S(i,j)&:=R(i,j)\cup R^{-1}(i,j)\cup 1_{\calC (i,j)}\\
S(i,j)^1&:=S(i,j)\\
S(i,j)^n&:=\{(h,f)\mid \exists g \in \calC(i,j), (g,f)\in S(i,j),(h,g)\in S(i,j)^{n-1}\}
\quad(\text{for all }n\ge 2)\\
S(i,j)^{\infty} &:=\dis\bigcup _{n\geq 1}S(i,j)^n
\text{, and set}\\
R^e&:=\bigcup _{(i,j)\in \calC _0\times \calC _0}S(i,j)^{\infty}.\end{align*}
$R^e$ is called the {\em equivalence relation} generated by $R$.\item
We set $R^\#:=(R^c)^e$ (cf.\ \cite{Ho95}).
\end{enumerate}
\end{dfn}

\begin{rmk}
In the statement (2) above,
$S(i,j)^{\infty }$ is the smallest equivalence relation on $\calC (i,j)$ containing $R(i,j)$ for each $i,j\in \calC _0$. 
\end{rmk}

\begin{dfn}\label{teigi2}
Let $\calC$ be a category and $R \subseteq \Rel(\calC)$. 
Then a category\ $\calC/R^\#$ is defined as follows:
\begin{enumerate}
\item[(i)]$(\calC/R^\#)_0:=\calC _0$. 
\item[(ii)]For $i,j\in (\calC/R^\#)_0$, $(\calC/R^\#)(i,j):=\calC(i,j)\slash R^\#(i,j)$. \\
For each $f\in (\calC/R^\#)(i,j)$, we set $\ol{f}$ the equivalence class of $f$ in $R^\#$. 
\item[(iii)]For $i,j,k\in (\calC/R^\#)_0$ and $\ol{f}\in (\calC/R^\#)(i,j)$, $\ol{g}\in (\calC/R^\#)(j,k)$, $\ol{g}\circ \ol{f}:=\ol{g\circ f}$. 
\item[(iv)]A functor\ $F:\calC \rar \calC/R^\#$ is defined as follows:
\begin{enumerate}
\item[(a)]For $i\in \calC _0$, $F(i)=i$. 
\item[(b)]For $i,j\in \calC (i,j)$ and $f\in \calC (i,j)$, $F(f)=\ol{f}$. 
\end{enumerate}
\end{enumerate}
\end{dfn}

\begin{rmk}
In definition\ref{teigi2}, $R^\#$ is a congruence relation,
therefore the composition in (iii) is well-defined. 
\end{rmk}

The following is well known (cf.\ \cite{ML}).

\begin{prp}
Let $\calC$ be a category, and $R \subseteq \Rel(\calC)$. 
Then the category\ $\calC/R^\#$ and the functor\ $F:\calC \rar \calC/R^\#$
defined above satisfy the following conditions. 
\begin{enumerate}
\item[(i)]For each $i,j\in \calC _0$ and each $(f,f')\in R(i,j)$ we have $Ff=Ff'$. 
\item[(ii)] If a functor\ $G:\calC \rar \calD$ satisfies $Gf=Gf'$ for all $f,f'\in \calC(i,j)$
and all $i,j\in \calC_0$ with $(f,f')\in R(i,j)$,
then there exists a unique functor\ $G':\calC/R^\#\rar \calD$ such that $G'\circ F=G$. 
\end{enumerate}
\end{prp}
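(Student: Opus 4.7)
The plan is to prove (i) directly from the construction and to prove (ii) by first forcing $G'$ via $F$ being identity-on-objects and surjective-on-morphisms, then verifying well-definedness by showing the relation $\{(f,f') \mid Gf = Gf'\}$ contains $R^\#$.

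First I would handle (i). By construction $R\subseteq R^c$ (take $c = \id_i$, $d = \id_j$ in the defining expression $(dac,dbc)$), and then $R^c \subseteq (R^c)^e = R^\#$ because the equivalence closure contains the original set via $S(i,j)^1$. Hence any $(f,f') \in R(i,j)$ lies in $R^\#(i,j)$, so $\ol{f} = \ol{f'}$ in $\calC/R^\#$, giving $Ff = Ff'$.

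For (ii), uniqueness is immediate: since $F$ is the identity on objects and $F(f) = \ol{f}$ exhausts all morphisms of $\calC/R^\#$, the equations $G'(i) = G(i)$ and $G'(\ol f) = G(f)$ are forced. The substantive task is existence, i.e., showing that the assignment $\ol f \mapsto Gf$ is well-defined on equivalence classes. To do this I would consider the relation
\[
T(i,j) := \{(f,f') \in \calC(i,j)\times \calC(i,j) \mid Gf = Gf'\}
\]
and argue $R^\#(i,j) \subseteq T(i,j)$ for all $i,j$. The hypothesis gives $R \subseteq T$. Since $G$ is a functor, whenever $Ga = Gb$ and $c,d$ are composable morphisms, $G(dac) = (Gd)(Ga)(Gc) = (Gd)(Gb)(Gc) = G(dbc)$; this shows $T$ is already closed under the congruence-closure operation, so $R^c \subseteq T$. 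Because equality is symmetric, reflexive, and transitive, $T$ is an equivalence relation on each hom-set, hence closed under the $(-)^e$ operation applied to $R^c$, yielding $R^\# = (R^c)^e \subseteq T$. This is the main (though routine) obstacle: checking that each of the two closure operations preserves the property of being contained in $T$.

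Having established well-definedness, I would set $G'(i) := G(i)$ and $G'(\ol f) := G(f)$, check functoriality (identities and composites) using the corresponding properties of $G$ together with Definition~\ref{teigi2}(iii), and verify $G'\circ F = G$ by direct evaluation on objects and morphisms. This completes (ii), and no deeper machinery beyond the closure descriptions in the definition of $R^\#$ is required.
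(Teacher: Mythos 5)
Your argument is correct and complete. Note that the paper itself gives no proof of this proposition --- it is simply stated as well known with a reference to Mac Lane --- so there is no in-text argument to compare against; your proof (deducing $R\subseteq R^\#$ from the two closure steps for (i), and for (ii) showing that the kernel relation $T$ of $G$ is a congruence and an equivalence on each hom-set, hence absorbs both the $(-)^c$ and $(-)^e$ closures of $R$) is exactly the standard verification one would expect, and it uses the paper's literal definitions of $R^c$, $R^e$ and $R^\#$ correctly.
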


\begin{dfn}
Let $Q$ be a quiver and $R \subseteq \Rel(\bbP Q)$.
We set 
$$
\ang{Q\mid R}:= \bbP Q/R^\#.
$$
\end{dfn}

The following is straightforward.
\begin{prp}
Let $\calC$ be a category, $Q$ the underlying quiver of $\calC$, and
set
$$R:=\{(e_i,\id_i),(\mu ,[\mu ])\mid i\in Q_0,\mu \in Q_{\geq 2}\}\subseteq \Rel(\bbP Q),$$
where $e_i$ is the path of length $0$ at each vertex $i \in Q_0$,
and $[\mu]:=\al_n\circ\dots\circ\al_1$ $($the composite in $\calC)$ for
all paths $\mu =\al_n\dots\al_1 \in Q_{\ge 2}$ with
$\al_1,\dots,\al_n\in Q_1$.
Then
$$\calC\iso \ang{Q\mid R}.$$
\end{prp}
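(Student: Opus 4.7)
The plan is to use the universal property recorded in the preceding proposition. First I would construct a functor $F\colon \bbP Q \to \calC$ that is the identity on objects, sends each empty path $e_i$ to $\id_i$, fixes every arrow $a\in Q_1$ (which is by definition a morphism of $\calC$), and sends each path $\mu = \al_n\cdots\al_1 \in Q_{\ge 2}$ to the composite $[\mu] = \al_n \circ \cdots \circ \al_1$ in $\calC$. Functoriality is immediate because concatenation of paths in $\bbP Q$ corresponds precisely to composition in $\calC$.

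By construction $F(e_i) = \id_i = F(\id_i)$ (where on the right $\id_i$ is regarded as a length-one path in $Q_1$) and $F(\mu) = [\mu] = F([\mu])$ for each $\mu \in Q_{\ge 2}$, so $F$ sends both components of every generating pair in $R$ to the same morphism of $\calC$. The universal property then yields a unique functor $\bar F\colon \ang{Q\mid R} \to \calC$ with $\bar F \circ P = F$, where $P\colon \bbP Q \to \bbP Q/R^\#$ is the quotient functor.

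It remains to see that $\bar F$ is an isomorphism. Bijectivity on objects is tautological since $(\ang{Q\mid R})_0 = Q_0 = \calC_0$ and $\bar F$ is the identity there. For surjectivity on Hom-sets, any $f \in \calC(i,j)$ is already an element of $Q_1$, hence a length-one path, and $\bar F(\ovl f) = F(f) = f$. The only substantive step is injectivity on Hom-sets, and for this the key observation is that every class in $\ang{Q\mid R}$ admits a representative in $Q_1$: the pair $(e_i,\id_i)\in R$ rewrites any empty path as $\id_i \in Q_1$, and the pair $(\mu,[\mu])\in R$ rewrites any path of length $\ge 2$ directly as the single arrow $[\mu]\in Q_1$. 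Hence given $\bar F(\ovl\mu) = \bar F(\ovl{\mu'})$ we may choose representatives $\mu,\mu'\in Q_1$, and then $\mu = F(\mu) = F(\mu') = \mu'$, so $\ovl{\mu} = \ovl{\mu'}$.

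The only point that requires a moment's attention is this reduction to a representative in $Q_1$; but it uses the generating pairs of $R$ directly (not their whiskering closure), so no subtlety about congruences versus equivalences arises. Everything else is mechanical.
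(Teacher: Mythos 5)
Your proof is correct and is precisely the standard argument the paper has in mind when it labels this proposition ``straightforward'' and omits the proof: induce $\bar F\colon \ang{Q\mid R}\to\calC$ from the evident evaluation functor $\bbP Q\to\calC$ via the universal property, and get injectivity on Hom-sets by observing that the generating pairs $(e_i,\id_i)$ and $(\mu,[\mu])$ let every class be represented by a single arrow, on which $F$ is the identity. No gaps.
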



By this statement, an arbitrary category is presented by a quiver and relations. 
Throughout the rest of this paper $I$ is a small category
with a presentation $I=\langle Q\mid R\rangle$.

\section{Grothendieck constructions of Diagonal functors}

\begin{dfn}
Let  $X:I\rar \kCat$ be a functor. Then a category $\gr (X)$, called the {\em Grothendieck construction} of $X$, is defined as follows:
\begin{enumerate}
\item[(i)] $(\gr (X))_0:=\dis \bigcup _{i\in I_0}\{(i,x)\mid x\in X(i)_0\}$
\item[(ii)] For $(i,x),(j,y)\in (\gr (X))_0$
$$\gr (X)((i,x),(j,y)):=\dis \bigoplus _{a\in I(i,j)}X(j)(X(a)x,y)$$
\item[(iii)] For $f=(f_a)_{a\in I(i,j)}\in \gr (X)((i,x),(j,y))$ and $g=(g_b)_{b\in I(j,k)}\in \gr (X)((j,y),(k,z))$
$$g\circ f:=\left(\dis \sum_{\begin{subarray}{c}c=ba\\ a\in I(i,j)\\ b\in I(j,k)\end{subarray}}g_bX(b)f_a\right)_{c\in I(i,k)}$$
\end{enumerate}
\end{dfn}

\begin{dfn}
Let $\calC \in \kCat_0$.
Then the {\em diagonal functor} $\De(\calC)$ of $\calC$ is a functor from $I$ to $\kCat$ sending each arrow $a\colon i\to j$ in $I$  to
$\id_{\calC}\colon \calC \to \calC$
in $\kCat$. 
\end{dfn}
In this section, we fix a $\k$-algebra $A$ which we regard
as a $\k$-category with a single object $*$ and
with $A(*,*)=A$.
The {\em quiver algebra} $AQ$ of $Q$ over $A$ is the $A$-linearization
of $\bbP Q$, namely $AQ:=A\otimes_\k \k Q$.

\begin{dfn}
The ideal of $AQ$ generated by the elements
$g-h$ with $(g,h)\in R$ is denoted by $\ang{R}_A$:
$$\langle R\rangle_A :=AQ\{g-h\mid (g,h)\in R\}AQ.$$
\end{dfn}

The purpose of this section is to prove the following theorem which computes the Grothendieck construction $\Gr(\De(A))$
of $\dia :I\to \kCat$.

\begin{thm}\label{Gr-diag}
$\gr (\dia )\iso AQ\slash \langle R\rangle_A$.
\end{thm}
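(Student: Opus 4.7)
The plan is to construct a $\k$-linear functor $\Phi\colon AQ \to \gr(\De(A))$, show that it factors through $AQ/\ang{R}_A$, and prove that the induced functor $\bar\Phi$ is an isomorphism. The underlying intuition is that $\gr(\De(A))$ is, by its very definition, essentially the $A$-linearisation of $I$ (since every $X(b)$ equals $\id_A$), so both sides of the claimed isomorphism should be two presentations of the same object.

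First I would set $\Phi(i) := i$ on objects and, for each arrow $\al\colon i \to j$ of $Q$, define $\Phi(\al)$ to be the element of $\gr(\De(A))((i,*),(j,*)) = \bigoplus_{a \in I(i,j)} A$ whose $[\al]$-coordinate is $1_A$ and whose other coordinates vanish, where $[\al]$ denotes the class of $\al$ in $I = \ang{Q \mid R}$. Extending multiplicatively to paths and then $A$-linearly yields a $\k$-linear functor. Functoriality follows from the composition formula in $\gr(\De(A))$: since $\De(A)$ sends every morphism of $I$ to $\id_A$, the formula collapses to $(g\circ f)_c = \sum_{c=ba} g_b f_a$, and a direct check then gives $\Phi(b \ox \nu) \circ \Phi(a \ox \mu) = \Phi(ba \ox \nu\mu)$.

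To descend to the quotient I would observe that for each $(g,h) \in R$ the paths $g$ and $h$ satisfy $[g] = [h]$ in $I$, so $\Phi(g) = \Phi(h)$, and $\Phi$ annihilates every generator $g-h$ of $\ang{R}_A$; by $\k$-linearity this induces $\bar\Phi\colon AQ/\ang{R}_A \to \gr(\De(A))$. For the inverse, for each $(i,j) \in Q_0 \times Q_0$ and each $b \in I(i,j)$ I would fix a representative path $\mu_b \in Q_{\ge 0}(i,j)$ with $[\mu_b] = b$, and define $\Psi_{i,j}\colon \bigoplus_{b \in I(i,j)} A \to (AQ/\ang{R}_A)(i,j)$ by $(a_b)_b \mapsto \sum_b \overline{a_b \ox \mu_b}$. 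A direct computation then shows that $\Psi_{i,j}$ and $\bar\Phi_{i,j}$ are mutually inverse.

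The main technical point is to verify that $\Psi_{i,j}$ is independent of the chosen representatives, equivalently that $\overline{\mu - \nu} = 0$ in $AQ/\ang{R}_A$ whenever $(\mu,\nu) \in R^\#$. This is handled by induction on the definition $R^\# = (R^c)^e$: for $(\mu,\nu) \in R^c$ the difference has the form $d(g-h)c$, which lies in $\ang{R}_A$ by construction, while the reflexivity, symmetry and transitivity steps producing $R^e$ present no difficulty because $\ang{R}_A$ is an additive subgroup of $AQ(i,j)$. Once this congruence-vs-ideal comparison is in hand, everything else is a routine verification.
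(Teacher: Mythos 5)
Your proof is correct, and it rests on the same underlying identification as the paper's (both sides are the $A$-linearization of $I$), but the logical organization differs enough to be worth comparing. The paper first proves two lemmas: a module-theoretic one (Lemma~\ref{ho}) computing $(\bigoplus_{x\in S}Ax)/(\sum_{(g,h)\in E}A(g-h))$ for an equivalence relation $E$, and Lemma~\ref{da} giving the exact equality $\ang{R}_A(i,j)=\sum_{(g,h)\in R^{\#}(i,j)}A(g-h)$; together these identify the hom-modules of $AQ/\ang{R}_A$ with $\bigoplus_{a\in I(i,j)}Aa$, after which a functor is defined in the direction $\gr(\De(A))\to AQ/\ang{R}_A$ and bijectivity is read off. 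You go the other way: you map $AQ\to\gr(\De(A))$, kill $\ang{R}_A$ using only the easy observation that $[g]=[h]$ in $I$ for $(g,h)\in R$, and then build an explicit two-sided inverse $\Psi$ by choosing representative paths $\mu_b$ for the classes $b\in I(i,j)$. The price is the well-definedness check for $\Psi$, namely that $(\mu,\nu)\in R^{\#}$ forces $\mu-\nu\in\ang{R}_A$; your induction along $R^{\#}=(R^{c})^{e}$ is exactly the paper's argument for the inclusion $\sum_{(g,h)\in R^{\#}(i,j)}A(g-h)\subseteq\ang{R}_A(i,j)$ in Lemma~\ref{da}. What your route buys is that you never need the reverse inclusion (whose proof in the paper requires verifying that $\sum_{(g,h)\in R^{\#}(i,j)}A(g-h)$ is an ideal, using that $R^{\#}$ is a congruence), nor Lemma~\ref{ho} at all: injectivity of $\bar\Phi$ comes for free from the explicit inverse rather than from a kernel computation. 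Both arguments are complete; yours is marginally more economical, while the paper's yields the explicit description of $\ang{R}_A(i,j)$ as a by-product.
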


To prove this theorem, we use the following two lemmas.

\begin{lem}\label{ho}
Let $S$ be a set,
$E\subseteq S\times S$ an equivalence relation on $S$.
Then
$$(\bigoplus _{x\in S}Ax)\slash (\sum _{(g,h)\in E }A(g-h))\iso \bigoplus _{\ol{x}\in S\slash E}A\ol{x}$$
\end{lem}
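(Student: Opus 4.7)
The plan is to exhibit the natural $A$-linear surjection and compute its kernel directly. First I would define $\pi \colon \bigoplus_{x \in S} Ax \to \bigoplus_{\ol{x} \in S/E} A\ol{x}$ as the $A$-linear extension of $x \mapsto \ol{x}$. For each $(g,h) \in E$ we have $\pi(g-h) = \ol{g} - \ol{h} = 0$, so the submodule $N := \sum_{(g,h) \in E} A(g-h)$ is contained in $\ker \pi$; hence $\pi$ descends to an $A$-linear map $\bar{\pi}$ on the quotient. Surjectivity of $\bar{\pi}$ is immediate because each basis element $\ol{x}$ on the right is hit by $x$.

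The substantive step is the reverse inclusion $\ker \pi \subseteq N$. Given a finitely supported $\xi = \sum_{x\in S} a_x x$ with $\pi(\xi) = 0$, I would decompose it along equivalence classes as $\xi = \sum_{C \in S/E} \xi_C$ with $\xi_C = \sum_{x \in C} a_x x$. Since the classes form an $A$-basis of the codomain, $\pi(\xi) = \sum_C \bigl(\sum_{x \in C} a_x\bigr) \ol{C} = 0$ forces $\sum_{x \in C} a_x = 0$ for every class $C$. Fixing a representative $x_C \in C$, one rewrites
$$\xi_C = \sum_{x \in C} a_x (x - x_C) + \Bigl(\sum_{x \in C} a_x\Bigr) x_C = \sum_{x \in C} a_x (x - x_C),$$
and each $x - x_C$ is of the form $g - h$ with $(g,h) = (x, x_C) \in E$ since $x$ and $x_C$ lie in the same class. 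Thus $\xi_C \in N$, and summing over $C$ gives $\xi \in N$, yielding $\ker \pi = N$ and hence the desired isomorphism.

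The only point requiring care is this last substitution: it uses crucially that $E$ is an equivalence relation, so that $(x, x_C) \in E$ for every $x \in C$; the argument would fail for a general subset of $S \times S$. Beyond this bookkeeping, everything amounts to the standard observation that a free module modulo the submodule generated by differences of ``to-be-identified'' basis elements is the free module on the identified basis, so I do not expect any genuine obstacle.
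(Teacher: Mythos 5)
Your proposal is correct and follows essentially the same route as the paper: both exhibit the canonical surjection, check the generators $g-h$ lie in the kernel, and prove the reverse inclusion by splitting a kernel element over equivalence classes, using $\sum_{x\in C}a_x=0$ to rewrite each class component as an $A$-combination of differences $x-x_C$ with $(x,x_C)\in E$. No issues.
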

\begin{proof}
Let
$\ep :\dis \bigoplus _{x\in S}Ax
\rar
\bigoplus _{\ol{x}\in S\slash E}A\ol{x}$ be
a homomorphism of $A$-modules defined
by $x\mapsto \ol{x}$ ($x \in S$).
Then the sequence
$$0\rar \sum _{(g,h)\in E }A(g-h) \incl \bigoplus _{x\in S}Ax\xrightarrow{\ep } \bigoplus _{\ol{x}\in S\slash E}A\ol{x} \rar 0$$
is exact. 
Indeed, since $\ep$ is obviously a surjection by definition,
it is enough to show that
$\Ker \ep =\dis \sum _{(g,h)\in E }A(g-h)$. \\
For each $(g,h)\in E$ we have
$$\ep (g-h)=\ol{g-h}=\ol{g}-\ol{h}=0,$$
and hence $\dis \sum _{(g,h)\in E}A(g-h)\subseteq \Ker \ep$. \\
To prove the reverse inclusion,
let $\dis \sum _{x\in S}a_xx\in \Ker \ep $ ($a_x \in A)$.
Then
since 
$$0= \ep \left(\sum _{x\in S}a_xx\right)=\sum _{x\in S}a_x\ol{x}=\sum _{\ol{x}\in S\slash E}\sum _{x'\in \ol{x}}a_{x'}\ol{x},$$
we have $\sum _{x'\in \ol{x}}a_{x'}=0$
for each $\ol{x}\in S\slash E$,
and hence for each $x \in S$ we have
$$a_{x} = -\sum _{x'\in \ol{x}\backslash\{x\}}a_{x'}$$
and
$$\sum _{x'\in \ol{x}}a_{x'}x'=a_xx+\sum _{x'\in \ol{x}\setminus \{x\}}a_{x'}x'=\sum _{x'\in \ol{x}\setminus \{x\}}a_{x'}(x'-x).$$
Let $L$ be a complete set of representatives in $S\slash E$.
Then we have
$$\sum _{x\in S}a_xx=\sum _{x\in L}\sum _{(x,x')\in E\backslash \{(x,x)\}}a_{x'}(x'-x).$$
Hence $\Ker \ep \subseteq \dis \sum _{(g,h)\in E}A(g-h)$
and we have $\Ker \ep =\dis \sum _{(g,h)\in E}A(g-h)$. 
\end{proof}

We will gives an explicit form of $\langle R\rangle_A$
as follows.

\begin{lem}\label{da}
For each $i,j\in Q_0$, 
$$\langle R\rangle_A(i,j)=\sum_{(g,h)\in R^\# (i,j)}A(g-h)$$
\end{lem}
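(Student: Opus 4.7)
The plan is to establish the two inclusions separately, using the constructive description of $R^\#=(R^c)^e$ given in the definition together with the explicit description of elements of the two-sided ideal $\langle R\rangle_A$.

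First I would unpack what an element of $\langle R\rangle_A(i,j)$ looks like. Since $AQ$ is the $A$-linearization of $\bbP Q$, a typical element of $\langle R\rangle_A(i,j)$ is an $A$-linear combination of elements of the form $d(g-h)c$ with $(g,h)\in R$, where $c$ is a path from $i$ to $t(g)=t(h)$ and $d$ is a path from $h(g)=h(h)$ to $j$. This immediately gives the inclusion $(\supseteq)$ for the generating pairs of $R^c$: if $(dgc,dhc)\in R^c(i,j)$ with $(g,h)\in R$, then $dgc-dhc=d(g-h)c\in \langle R\rangle_A(i,j)$.

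Next I would extend this to all of $R^\#(i,j)$ by induction on the construction of $R^e$ from $R^c$. Given $(g,h)\in R^\#(i,j)=S(i,j)^\infty$ where $S=R^c\cup (R^c)^{-1}\cup 1$, there is $n\ge 1$ with $(g,h)\in S(i,j)^n$. For $n=1$, each of the three alternatives is handled trivially: $g-h\in \langle R\rangle_A$ when $(g,h)\in R^c$ by the previous paragraph, when $(h,g)\in R^c$ because $g-h=-(h-g)$, and when $g=h$ because $g-h=0$. For the inductive step, an intermediate $k\in \calC(i,j)$ with $(g,k)\in S^{n-1}$ and $(k,h)\in S$ lets us write $g-h=(g-k)+(k-h)$ with both summands in $\langle R\rangle_A$ by induction. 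This yields $(\supseteq)$.

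For $(\subseteq)$, the key point is that a generic generator $d(g-h)c$ of $\langle R\rangle_A(i,j)$ equals $dgc-dhc$ with $(dgc,dhc)\in R^c(i,j)\subseteq R^\#(i,j)$, so it lies in $\sum_{(u,v)\in R^\#(i,j)}A(u-v)$. Since $R^\#$ is an equivalence relation, one checks that this sum is stable under taking $A$-linear combinations of such differences $u-v$: symmetry is automatic, and if $(u,v),(v,w)\in R^\#(i,j)$ then $a(u-v)+b(v-w)=a(u-w)+(b-a)(v-w)$, so after collecting, every $A$-linear combination can be rewritten as a sum of $A$-multiples of $u-v$ with $(u,v)\in R^\#(i,j)$. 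I expect the main obstacle to be nothing conceptually deep but rather writing this bookkeeping cleanly: one must be careful that the path decomposition of elements of $AQ$ respects source/target conditions, and that the inductive unfolding of $R^e$ is handled without ambiguity. Putting the two inclusions together yields the claimed equality.
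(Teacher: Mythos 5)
Your proof is correct and takes essentially the same route as the paper's: the inclusion $\supseteq$ comes from factoring $dgc-dhc=d(g-h)c$ for $R^c$-pairs and then telescoping through the chain $S(i,j)^n$ for general $R^\#$-pairs, and the inclusion $\subseteq$ comes from expanding elements of the ideal into path components so that each resulting difference $\beta g\alpha-\beta h\alpha$ has $(\beta g\alpha,\beta h\alpha)\in R^c\subseteq R^\#$ --- exactly the content of the paper's argument, which merely packages this step as ``$\sum_{(g,h)\in R^\#(i,j)}A(g-h)$ is an ideal containing the generators $g-h$.'' The only superfluous part is your closing remark about rewriting $A$-linear combinations: the set $\sum_{(u,v)\in R^\#(i,j)}A(u-v)$ is by definition the collection of all such combinations, so no further stability check is needed.
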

\begin{proof}
We set $I(i,j):=\dis \sum_{(g,h)\in R^\# (i,j)}A(g-h)$. First, we prove that $I(i,j)$ is an ideal of $AQ$. 
It is obvious that $I(i,j)$ is closed under addition.
Let $a \in AQ(i',i)$, $b \in AQ(j,j')$, $c\in I(i,j)$.
Then there exist $a_\al,b_\be,c_{g,h}\in A$ such that
\begin{eqnarray*}
a&=&\sum_{\al \in \bbP Q(i'i)}a_\al \al \\
b&=&\sum_{\be \in \bbP Q(j,j')}b_\be \be \\
c&=&\sum_{(g,h)\in R^\#(i,j)}c_{g,h}(g-h)
\end{eqnarray*}
and
\begin{eqnarray*}
bca&=&\left(\sum_{\be \in \bbP Q(j,j')}b_\be \be\right)\left(\sum_{(g,h)\in R^\#(i,j)}c_{g,h}(g-h)\right)\left(\sum_{\al \in \bbP Q(i'i)}a_\al \al\right)\\
     &=&\sum_{\de \in \bbP Q(i',j')}\sum_{\begin{subarray}{c}\de =\be \ga \\ \ga \in \bbP Q(i',j)\\ \be \in \bbP Q(j,j')\end{subarray}}\sum_{\begin{subarray}{c}\ga =(g-h)\al \\ \al \in \bbP Q(i',i)\\(g,h)\in R^\# (i,j)\end{subarray}}b_\be c_{g,h}a_\al \de.
\end{eqnarray*}
By $(g,h)\in R^\#$, $(\be g\al ,\be h\al )\in R^\#$. Hence
$bca \in AQ(i',j')$ as desired.

Next, we prove that $\ang{R}_A(i,j)=I(i,j)$. 
Since $R\subseteq R^\#$, for each $(g,h)\in R(i,j)$ we have
$$g-h\in I(i,j).$$
Hence $\langle R\rangle_A(i,j)\subseteq I(i,j)$.
Further for each $(g,h)\in R^c(i,j)$, 
there exist $(g',h')\in R(i',j')$, $e\in \bbP Q(i,i')$ 
and $f\in \bbP Q(j',j)$ such that
$$(g,h)=(fg'e,fh'e).$$
Then
$$g-h=fg'e-fh'e=f(g'-h')e\in \langle R\rangle_A(i,j).$$
Hence also for each $(g,h)\in R^\#(i,j)$ we have $g-h\in I(i,j)$ 
because $I(i,j)$ is closed under addition.
Therefore $I(i,j)\subseteq \langle R\rangle_A(i,j)$, 
and hence $\langle R\rangle_A(i,j)=I(i,j)$. 
\end{proof}

\subsection*{Proof of Theorem\ref{Gr-diag}}
The object classes and the morphism spaces of
$\gr(\dia)$ and $AQ/\ang{R}_A$ are given as follows.

$\gr (\dia )$:
\begin{enumerate}
\item[(i)]$\gr (\dia )_0=\{ (i,*)\mid i\in Q_0\} $. 
\item[(ii)]For $(i,*),(j,*)\in \gr (\dia )_0$
\begin{eqnarray*}
\gr (\dia )((i,*),(j,*))&=&\bigoplus _{a\in I(i,j)}\dia(j)(\dia(a)(*),*)\\
                       &=&\bigoplus _{a\in I(i,j)}A(*,*)=A^{(I(i,j))}
\end{eqnarray*}
\end{enumerate}
$AQ\slash \langle R\rangle_A$:
\begin{enumerate}
\item[(i)]$(AQ\slash \langle R\rangle_A)_0=Q_0$. 
\item[(ii)]For $i,j\in (AQ\slash \langle R\rangle_A)_0$
\begin{eqnarray*}
(AQ\slash \langle R\rangle_A) (i,j)&=&(\bigoplus _{a\in \bbP Q(i,j)}Aa)\slash \langle R\rangle_A (i,j)\\
&=&(\bigoplus _{a\in \bbP Q(i,j)}Aa)\slash \sum_{(g,h)\in R^\#(i,j)}A(g-h)\\
&=&\Ds_{a\in I(i,j)}Aa
\end{eqnarray*}
\end{enumerate}
by Lemma \ref{da} and the last equality
is given by the isomorphism in Lemma \ref{ho}.
We define a functor $F:\gr (\dia )\rar AQ\slash \langle R\rangle_A$ by
$$(i,*)\mapsto i$$
$$(f_a)_{a \in I(i,j)}\mapsto \sum_{a \in I(i,j)}f_a a$$
for each $(f_a)_{a \in I(i,j)}: (i,*) \to (j,*)$ in $\gr (\dia )$. 
We check that $F$ is well-defined as a $\k$-linear functor.
For each $(i,*)\in \gr (\dia )_0$ we have
\begin{eqnarray*}
F(\id _{(i,*)})&=&F((\de _{1_ia})_{a\in I(i,i)})\\
                 &=&\sum_{a\in I(i,i)}\de _{1_ia}a\\
                 &=&1_i
\end{eqnarray*}
For each $f\in \gr (\dia )((i,*,),(j,*))$ and $g\in \gr (\dia )((j,*,),(k,*))$, 
there exist $f_a,g_b\in A$ ($a\in I(i,j),b\in I(j,k)$) such that
\begin{eqnarray*}
f&=&(f_a)_{a\in I(i,j)}\\
g&=&(g_b)_{b\in I(j,k)}.
\end{eqnarray*}
Then
\begin{eqnarray*}
F(g\circ f)&=&F\left(\left(\sum_{\begin{subarray}{c}c=ba\\ a\in I(i,j)\\ b\in I(j,k)\end{subarray}}g_bf_a\right)_{c\in I(i,k)}\right)\\
              &=&\sum_{c\in I(i,k)}\left(\sum_{\begin{subarray}{c}c=ba\\ a\in I(i,j)\\ b\in I(j,k)\end{subarray}}g_bf_a\right)c
\end{eqnarray*}
\begin{eqnarray*}
F(g)F(f)&=&\left(\sum_{b\in I(j,k)}g_bb\right)\left(\sum_{a\in I(i,j)}f_aa\right)\\
          &=&\sum_{c\in I(i,k)}\left(\sum_{\begin{subarray}{c}c=ba\\ a\in I(i,j)\\ b\in I(j,k)\end{subarray}}g_bf_a\right)c\\
          &=&F(g\circ f).
\end{eqnarray*}
Hence $F$ is a functor.
Obviously $F$ is $\k$-linear. 
It is clear that $F$ is bijective on objects and that 
for each $i,j\in Q_0$, $F$ induces an isomorphism
$$\gr (\dia )((i,*),(j,*))\to (AQ\slash \langle R\rangle_A)(i,j)$$
by the definition of $F$.
Therefore $\gr (\dia )\iso AQ\slash \langle R\rangle_A$. 
\qed

\begin{rmk}
Theorem \ref{Gr-diag} can be written in the form
$$\gr (\dia )\iso A\otimes _\Bbbk (\Bbbk Q\slash \langle R\rangle _\Bbbk).$$
\end{rmk}

\section{The quiver presentation of Grothendieck constructions}

In this section we give a quiver presentation of the Grothendieck construction of an arbitrary functor
$I \to \kCat$.  Throughout this section we assume that $\k$ is a field.

\begin{thm}\label{qv-pres-Gr}
Let $X:I\rar \kCat$ be a functor, and for each $i\in I$ set $X(i)=\Bbbk Q^{(i)}\slash \langle R^{(i)}\rangle$ with $\Ph^{(i)}\colon \k Q^{(i)} \to X(i)$
the canonical morphism, where $R^{(i)}\subseteq \Bbbk Q^{(i)}$, $\langle R^{(i)}\rangle \cap \{e_x\mid x\in Q{(i)}_0\}=\emptyset$. 
Then Grothendieck construction is presented by the quiver with relations
$(Q, R')$ defined as follows. 

Quiver: $Q'=(Q'_0,Q'_1,t',h')$, where
\begin{enumerate}
\item[(i)] $Q'_0:=\dis \bigcup _{i\in I}\{{}_ix\mid x\in Q_0^{(i)}\} $. 
\item[(ii)] $Q'_1:=\dis \bigcup _{i\in I}\{ \{{}_i\al \mid \al \in Q_1^{(i)}\}\cup \{ (a,{}_ix):{}_ix\rar {}_j(ax)\mid a:i\rar j\in Q_1,x\in Q_0^{(i)},ax\neq 0\} \} $,\\
where we set $ax:=X(\ol{a})(x)$. 
\item[(iii)] For $\al \in Q_1^{(i)}$, $t'({}_i\al )=t^{(i)}(\al )$ and $h'({}_i\al )=h^{(i)}(\al )$. 
\item[(iv)] For $a:i\rar j\in Q_1,x\in Q_0^{(i)}$, $t'(a,{}_ix)={}_ix$ and $h'(a,{}_ix)={}_j(ax)$. 
\end{enumerate}

Relations: $R':=R'_1\cup R'_2\cup R'_3$, where 
\begin{enumerate}
\item[(i)] $R'_1:=\{\si ^{(i)}(\mu )\mid i\in Q_0,\mu \in R^{(i)}\}$, \\
where we set $\si^{(i)}:\Bbbk Q^{(i)}\hookrightarrow \Bbbk Q'$. 
\item[(ii)] $R'_2:=\{\pi (g,{}_ix)-\pi (h,{}_ix)\mid i,j\in Q_0,(g,h)\in R(i,j),x\in Q_0^{(i)}\}$,
where for each path $a$ in $Q$ we set
$$\pi (a,{}_ix):=(a_n,{}_{i_{n-1}}(a_{n-1}a_{n-2}\dots a_1x))\dots (a_2,{}_{i_1}(a_1x))(a_1,{}_ix)$$ 
if $a=a_n\dots a_2a_1$ for some $a_1, \dots, a_n$ arrows in $Q$, and
$$
\pi (a,{}_ix):= e_{{}_ix}
$$
if $a=e_i$ for some $i \in Q_0$.
\item[(iii)] $R'_3:=\{(a,{}_iy){}_i\al -{}_j(a\al )(a,{}_ix)\mid a:i\rar j\in Q_1,\al :x\rar y\in Q_1^{(i)}\}$,
where we take $a\al :ax\rar ay$ so that $\Ph ^{(j)}(a\al) \in X(\ol{a})\Ph ^{(i)}(\al)$:
\[\xymatrix{\ \al \in \Bbbk Q^{(i)}  \ar[r]^(0.60){\Ph ^{(i)}} & X(i)\ \ \  \ar @<-2.5mm>[d]^{X(\ol{a})}\\
                 a\al \in \Bbbk Q^{(j)}  \ar[r]^(0.60){\Ph ^{(j)}} & X(j).\ \ \ }\]
\end{enumerate}
Note that the ideal $\langle R'\rangle$ is 
independent of the choice of $a\al$
because $R'_1\subseteq R'$. 
\end{thm}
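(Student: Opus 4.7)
The plan is to build a $\k$-linear functor $F \colon \k Q' \to \Gr(X)$, check that it vanishes on each of $R'_1, R'_2, R'_3$, and then show that the induced functor $\bar F \colon \ang{Q' \mid R'} \to \Gr(X)$ is bijective on objects and on Hom spaces. Define $F$ on objects by ${}_i x \mapsto (i,x)$. For an internal arrow ${}_i \alpha$ with $\alpha \colon x \to y$, let $F({}_i \alpha)$ be the element of $\Gr(X)((i,x),(i,y))$ supported on the $1_i$-component with value $\Ph^{(i)}(\alpha)$. For a horizontal arrow $(a, {}_i x) \colon {}_i x \to {}_j(ax)$, let $F((a, {}_i x))$ be the element of $\Gr(X)((i,x),(j,ax))$ supported on the $\bar a$-component with value $\id_{ax}$, where $\bar a \in I(i,j)$ denotes the image of $a$. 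Extending $\k$-linearly and multiplicatively gives the functor $F$.

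The vanishing checks use the composition rule of $\Gr(X)$ directly. Vanishing on $R'_1$ is immediate from $\Ph^{(i)}\langle R^{(i)}\rangle = 0$. For $R'_2$, induction on path length shows that $F(\pi(a, {}_i x))$ has $\bar a$-component $\id_{ax}$ and vanishes elsewhere; so $(g,h) \in R$ forces $\bar g = \bar h$ in $I$ and hence $F(\pi(g, {}_i x)) = F(\pi(h, {}_i x))$. For $R'_3$, a one-step composition shows that both $F((a, {}_i y) \cdot {}_i \alpha)$ and $F({}_j(a\alpha) \cdot (a, {}_i x))$ are supported on the $\bar a$-component, with values $X(\bar a)\Ph^{(i)}(\alpha)$ and $\Ph^{(j)}(a\alpha)$ respectively; these agree by the defining condition of $a\alpha$ given in the statement. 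Hence $F$ descends to $\bar F$, which is clearly bijective on objects.

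Surjectivity of $\bar F$ on Hom spaces is routine: for $(f_a)_{a \in I(i,j)}$ in $\Gr(X)((i,x),(j,y))$, choose $\beta_a \in \k Q^{(j)}(\tilde a x, y)$ with $\Ph^{(j)}(\beta_a) = f_a$ and a path representative $\tilde a \in \bbP Q(i,j)$ of each $\bar a$; then $\sum_a {}_j \beta_a \cdot \pi(\tilde a, {}_i x)$ is a preimage. The main obstacle is injectivity, which I reduce to a normal-form lemma: every element of $\k Q'({}_i x, {}_j y)$ is congruent modulo $\langle R'_3 \rangle$ to one of the shape ${}_j \beta \cdot \pi(\tilde a, {}_i x)$ with $\tilde a \in \bbP Q(i,j)$ and $\beta \in \k Q^{(j)}(\tilde a x, y)$. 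The lemma is proved by induction: each subword $(a, {}_i y) \cdot {}_i \alpha$ in a path is rewritten via $R'_3$ as ${}_j(a\alpha) \cdot (a, {}_i x)$, moving the horizontal arrow one step rightward past the internal arrow; iterating terminates when all horizontal arrows lie to the right of all internal arrows. Granted the normal form, if $\sum_a {}_j \beta_a \cdot \pi(\tilde a, {}_i x)$ maps to zero under $\bar F$, then each $\Ph^{(j)}(\beta_a) = 0$, so each $\beta_a \in \langle R^{(j)}\rangle$, and the whole expression lies in $\langle R'_1 \rangle + \langle R'_2 \rangle \subseteq \langle R' \rangle$, giving injectivity.
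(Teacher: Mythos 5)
Your overall architecture --- define $F\colon \k Q'\to\Gr(X)$ on generators, verify $F(R')=0$ relation family by relation family, and then prove the induced functor is an isomorphism on Hom-spaces via a normal form obtained by rewriting with $R'_3$ --- is exactly the paper's (its Claims 1, 2 and 4). The functor you define, the three vanishing computations, and the rewriting that pushes the horizontal arrows $(a,{}_ix)$ to the right all agree with the paper; the paper merely packages the endgame as the statement that $\{\ol{\al\,\pi(\tilde a,{}_ix)}\}$ is a basis rather than as surjectivity plus injectivity.

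There is, however, a genuine gap in your injectivity step. A normal form obtained modulo $\ang{R'_3}$ alone expresses an element of $\k Q'({}_ix,{}_jy)$ as a linear combination $\sum_{\tilde a}{}_j\beta_{\tilde a}\,\pi(\tilde a,{}_ix)$ indexed by \emph{paths} $\tilde a\in\bbP Q(i,j)$, not by morphisms of $I$. Two distinct paths $\tilde a\ne\tilde a'$ with $\ol{\tilde a}=\ol{\tilde a'}=b$ feed into the same component $b$ of the image, so vanishing under $\bar F$ only yields $\sum_{\ol{\tilde a}=b}\Ph^{(j)}(\beta_{\tilde a})=0$, not $\Ph^{(j)}(\beta_{\tilde a})=0$ for each $\tilde a$; for instance ${}_j\gamma\bigl(\pi(\tilde a,{}_ix)-\pi(\tilde a',{}_ix)\bigr)$ with $\Ph^{(j)}(\gamma)\ne0$ maps to zero although neither coefficient lies in $\ang{R^{(j)}}$. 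To make your conclusion ``each $\Ph^{(j)}(\beta_a)=0$'' legitimate you must first collapse the sum to one representative path per morphism of $I(i,j)$, and for that you need $\pi(g,{}_ix)\equiv\pi(h,{}_ix)\pmod{\ang{R'}}$ for \emph{all} $(g,h)\in R^\#(i,j)$, not merely for $(g,h)\in R(i,j)$ as $R'_2$ directly provides. This is precisely the paper's Claim 3, proved from $R'_2$ together with the multiplicativity $\pi(dac,{}_ix)=\pi(d,\cdot)\pi(a,\cdot)\pi(c,{}_ix)$ and the description of $R^\#$ as $(R^c)^e$; your argument omits it (your only appeal to $\ang{R'_2}$ comes after the componentwise vanishing, where in fact only $\ang{R'_1}$ is used). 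Once this step is inserted, the rest of your proof goes through and coincides with the paper's.
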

\begin{proof}
We define a $\k$-functor $\Ph :\Bbbk Q'\rar \gr (X)$ by:
\begin{enumerate}
\item[(i)]for ${}_ix\in Q'_0$, $\Ph ({}_ix)=(i,x)$;
\item[(ii)]for ${}_i\al :{}_ix\rar {}_iy\in Q^{(i)}_1$, $\Ph ({}_i\al )=(\de _{\id_ia}\Ph ^{(i)}(\al))_{a\in I(i,i)}$;
\item[(iii)]for $(a,{}_ix):{}_ix\rar {}_j(ax)\in Q'_1$, $\Ph ((a,{}_ix))=(\de _{\ol{a}b}\id_{X(\ol{a})(x)})_{b\in I(i,j)}$;
\item[(iv)]for $\al _n\al_{n-1}\dots \al _1\in \bbP Q'$\ $(\al _1,\dots ,\al _n\in Q')$
$$\Ph (\al _n\al _{n-1}\dots \al _1):=\Ph (\al _n)\Ph (\al _{n-1})\dots \Ph (\al _1);\text{ and}$$
\item[(v)] for $f:=\sum _{\al \in \bbP Q'({}_ix,{}_jy)}f_\al \al \in \Bbbk Q'({}_ix,{}_jy)$
($f_{\al} \in \k$)
$$
\Ph(f):= \sum _{\al \in \bbP Q'({}_ix,{}_jy)}f_\al \Ph(\al).
$$
\end{enumerate}

\begin{clm}
$\Ph $ is well-defined as a $\k$-functor, and  is bijective on objects. 
\end{clm}

Indeed, this is clear by noting that for each ${}_ix\in Q'_0$ we have
\begin{eqnarray*}
\Ph (e_{{}_ix})&=&(\de _{\id_i,a}\Ph ^{(i)}(e_x))_{a\in I(i,i)}\\
                 &=&\id_{(i,x)}.
\end{eqnarray*}

\begin{clm}
$\Ph (R')=0$. 
\end{clm}

Indeed, for each $i\in Q_0$, $\al, \be \in Q^{(i)}_1$ we have
\begin{eqnarray*}
\Ph ({}_i\be {}_i\al )&=&\Ph ({}_i\be )\Ph ({}_i\al )\\
                          &=&(\de_{1_i,b}\Ph ^{(i)}(\be ))_{b\in I(i,i)}
                          (\de_{1_i,a}\Ph ^{(i)}(\al ))_{a\in I(i,i)}\\
                          &=&\left(\sum _{\begin{subarray}{c} c=ba\\a\in I(i,i)\\b\in I(i,i)\end{subarray}}\de_{1_i,b}\Ph ^{(i)}(\be )X(b)(\de_{1_i,a}\Ph ^{(i)}(\al ))\right)_{c\in I(i,i)}\\
                          &=&(\de_{1_i,c}\Ph^{(i)}(\be \al ))_{c\in I(i,i)},
\end{eqnarray*}
which shows that $\Ph(\si^{(i)}(\mu))=
(\de_{1_i,c}\Ph^{(i)}(\mu))_{c\in I(i,i)}$
for each 
$\mu\in \bbP Q^{(i)}$,
and that for each $\mu \in R^{(i)}$,
$$
\Ph (\si^{(i)}(\mu))=(\de_{1_i,a}\Ph ^{(i)}(\mu ))_{a\in I(i,i)}
=(\de_{1_i,a}0)_{a\in I(i,i)}=0.
$$
Thus $\Ph(R'_1)=0$.

For each $g_1:i\rar j,g_2:j\rar k\in Q_1$, ${}_ix\in Q'$,
\begin{eqnarray*}
\Ph (\pi (g_2g_1,{}_ix))&=&\Ph ((g_2,{}_j(g_1x)))\Ph ((g_1,{}_ix))\\
                      &=&(\de_{\ol{g_2},b}\id_{X(\ol{g_2})(g_1x)})_{b\in I(j,k)}(\de_{\ol{g_1},a}\id_{X(\ol{g_1})(x)})_{a\in I(i,j)}\\
                      &=&\left(\sum _{\begin{subarray}{c}c=ba\\ a\in I(i,j)\\ b\in I(j,k)\end{subarray}}\de_{\ol{g_2},b}\id _{X(\ol{g_2})(g_1x)}X(b)(\de_{\ol{g_1},a}\id _{X(\ol{g_1})(x)})\right)_{c\in I(i,k)}\\
                      &=&(\de_{\ol{g_2g_1},c}\id _{X(\ol{g_2})(g_1x)}\id _{X(\ol{g_1})(x)})_{c\in I(i,k)}\\
                      &=&(\de_{\ol{g_2g_1},c}\id _{X(\ol{g_2g_1})(x)})_{c\in I(i,k)},
\end{eqnarray*}
which shows that $\Ph (\pi (g,{}_ix))=
(\de_{\ol{g},b}\id _{X(\ol{g})(x)})_{b\in I(i,j)}$
for each $g\in \bbP Q$.
Therefore 
\begin{eqnarray*}
\Ph (\pi (g,{}_ix)-\pi (h,{}_ix))&=&\Ph (\pi (g,{}_ix))-\Ph (\pi (h,{}_ix))\\
                                       &=&(\de_{\ol{g},b}\id _{X(\ol{g})(x)})_{b\in I(i,j)}-(\de_{\ol{h},a}\id _{X(\ol{h})(x)})_{a\in I(i,j)}\\
                                       &=&0
\end{eqnarray*}
because $\ol{g}=\ol{h}$ for each $(g,h)\in R(i,j)$. Thus $\Ph(R'_2)=0$.

For $a:i\rar j\in Q_1$, $\al :x\rar y\in Q_1^{(i)}$
\begin{eqnarray*}
\Ph ((a,{}_iy){}_i\al )&=&\Ph ((a,{}_iy))\Ph ({}_i\al )\\
                         &=&(\de_{\ol{a},c}\id _{X(\ol{a})(y)})_{c\in I(i,j)}(\de _{1_i,b}\Ph ^{(i)}(\al ))_{b\in I(i,i)}\\
                         &=&\left(\sum _{\begin{subarray}{c}d=cb\\b\in I(i,i)\\c\in I(i,j)\end{subarray}}\de_{\ol{a},c}\id _{X(\ol{a})(y)}X(c)(\de_{1_i,b}\Ph ^{(i)}(\al ))\right)_{d\in I(i,j)}\\
                         &=&(\de_{\ol{a},d}\id _{X(\ol{a})(y)}X(\ol{a})(\Ph ^{(i)}(\al )))_{d\in I(i,j)}\\
                         &=&(\de_{\ol{a},d}X(\ol{a})(\Ph ^{(i)}(\al )))_{d\in I(i,j)},
\end{eqnarray*}
\begin{eqnarray*}
\Ph ({}_j(a\al )(a,{}_ix))&=&\Ph ({}_j(a\al ))\Ph ((a,{}_ix))\\
                             &=&(\de_{1_j,c}\Ph ^{(j)}(a\al ))_{c\in I(j,j)}(\de _{\ol{a},b}\id _{X(\ol{a})(x)})_{b\in I(i,j)}\\
                             &=&\left(\sum _{\begin{subarray}{c}d=cb\\ b\in I(i,j)\\ c\in I(j,j)\end{subarray}}\de_{1_j,c}\Ph ^{(j)}(a\al )X(c)(\de_{\ol{a},b}\id _{X(\ol{a})(x)})\right)_{d\in I(i,j)}\\
                             &=&(\de_{\ol{a},d}\Ph^{(j)}(a\al )X(1_j)(\id _{X(\ol{a})(x)}))_{d\in I(i,j)}\\
                             &=&(\de_{\ol{a},d}\Ph^{(j)}(a\al ))_{d\in I(i,j)}.
\end{eqnarray*}
Since $X(\ol{a})(\Ph^{(i)}(\al))=\Ph^{(j)}(a\al)$ by the choice of $a\al$, 
we have
$$\Ph((a,{}_iy){}_i\al)=\Ph({}_j(a\al)(a,{}_ix)).$$
Hence $\Ph(R'_3)=0$, and finally $\Ph(R')=0$. 

By the claim above we see that $\Ph$ induces a functor\ $\bp:\Bbbk Q'\slash \langle R'\rangle \rar \gr (X)$. We prove that $\bp$ is an isomorphism.
To this end, we first consider a basis of $(\Bbbk Q'\slash \langle R'\rangle )({}_ix,{}_jy)$ for each ${}_ix$, ${}_jy\in Q'_0$.

\begin{clm}
For each $(g,h)\in R^\#(i,j)$ and $x\in Q^{(i)}$, $\ol{\pi(g,{}_ix)}=\ol{\pi(h,{}_ix)}$. 
\end{clm}

Indeed, there exist some $(a,b)\in R(i',j')$, $c\in \bbP Q(i,i')$ and $d\in \bbP Q(j',j)$ such that
$$(g,h)=(dac,dbc).$$
Then
\begin{eqnarray*}
\pi (g,{}_ix)-\pi (h,{}_ix)&=&\pi (dac,{}_ix)-\pi (dbc,{}_ix)\\
                              &=&\pi (d,{}_{j'}(acx))\pi (a,{}_{i'}(cx))\pi (c,{}_ix)-\pi (d,{}_{j'}(bcx))\pi (b,{}_{i'}(cx))\pi (c,{}_ix)\\
                              &=&\pi (d,{}_{j'}(acx))(\pi (a,{}_{i'}(cx))-\pi (b,{}_{i'}(cx)))\pi (c,{}_ix).
\end{eqnarray*}
Therefore since $\pi (a,{}_{i'}(cx))-\pi (b,{}_{i'}(cx))\in R'$, 
we have $\pi (g,{}_ix)-\pi (h,{}_ix)\in R'$. Hence $\ol{\pi(g,{}_ix)}=\ol{\pi(h,{}_ix)}$. 

\medskip

For each $a:i\to j$ in $I$ we define a functor $\tX(a)\colon 
\k Q^{(i)}\to \k Q^{(j)}$ as follows:
\begin{itemize}
\item For each $x\in Q^{(i)}_0$, $\tX(a)(x):=X(\ol{a})(x)$.
\item For each arrow $\al:x\to y$ in $Q^{(i)}$, $\tX(a)(\al):=a\al$.
\item For each path $\mu:=\al_n\dots \al_1$ $(n\ge 2)$ in $Q^{(i)}$, 
$\tX(a)(\mu):=\tX(a)(\al_n)\dots \tX(a)(\al_1)$. 
\end{itemize}

\begin{clm}
For each ${}_ix,{}_jy\in Q'_0$ and $\mu \in \bbP Q'({}_ix,{}_jy)$, 
there exist some $a\in I(i,j)$ and $\nu\in \k Q^{(j)}({}_j(ax),{}_jy)$
such that  $\ol{\mu}=\ol{\nu \pi (a,{}_ix)}$. 
\end{clm}

Indeed, since $(b,{}_kv){}_k\al -{}_l(b\al )(b,{}_ku)\in R'$ for each
$b:k\rar l$ in $Q_1$ and $\al :u\rar v$ in $Q_1^{(k)}$, we have 
$$
\ol{(b,{}_kv){}_k\al }=\ol{{}_l(b\al )(b,{}_ku)},
$$
which implies 
$$
\ol{(b,{}_kv)\si^{(k)}(\la)}=\ol{\si^{(l)}\tX(b)(\la)(b,{}_ku)}
$$
for each $\la\in \k Q^{(k)}({}_ku,{}_kv)$.
By using this formula in the path $\mu$ we can move factors of the form
$\ol{(b,{}_kv)}$ to the right, and finally we have
$$
\ol{\mu}=\ol{\nu(a_t,x_t)\cdots (a_1,x_1)}
$$
for some $0\le t\in \bbZ$, $\nu\in \k Q^{(j)}$, $x_1,\cdots ,x_t\in Q'_0$, $a_1,\cdots, a_t\in Q_1$, where $(a_t,x_t)\cdots (a_1,x_1)$ is a path of length t in $Q'$, and hence we have
$(a_t,x_t)\cdots (a_1,x_1)=\pi(a,x_1)$ ($a:=a_t\cdots a_1$).
Hence we have $\nu\in \k Q^{(j)}({}_j(ax),{}_jy)$ and 
$\ol{\mu}=\ol{\nu \pi (a,{}_ix)}$.

\begin{clm}
$\calM:=\{ \ol{\al \pi (a,{}_ix)}|a\in I(i,j),\al \in \calM _j(ax,y)\} $\  is a basis of $(\Bbbk Q'\slash \langle R'\rangle )({}_ix,{}_jy)$, where $\calM _j(ax,y)$ is a basis of $(\Bbbk Q^{(j)}\slash \langle R^{(j)}\rangle )(ax,y)$. 
\end{clm}

Indeed, assume $\dis\sum _{\begin{subarray}{c}a\in I(i,j)\\ \al \in \bbP Q^{(j)}(ax,y)\end{subarray}}k_{a,\al}\ol{\al \pi (a,{}_ix)}=0$.
Then
\begin{eqnarray*}
\bp \left(\sum _{\begin{subarray}{c}a\in I(i,j)\\ \al \in \bbP Q^{(j)}(ax,y)\end{subarray}}k_{a,\al}\ol{\al \pi (a,{}_ix)}\right)&=&\sum _{\begin{subarray}{c}a\in I(i,j)\\ \al \in \bbP Q^{(j)}(ax,y)\end{subarray}}k_{a,\al}\bp (\ol{\al })\bp (\ol{\pi (a,{}_ix)})\\
                     &=&\sum _{\begin{subarray}{c}a\in I(i,j)\\ \al \in \bbP Q^{(j)}(ax,y)\end{subarray}}k_{a,\al}(\de_{1_j,c}\Ph ^{(j)}(\al ))_{c\in I(j,j)}(\de_{a,b}\id _{X(a)(x)})_{b\in I(i,j)}\\
                     &=&\sum _{\begin{subarray}{c}a\in I(i,j)\\ \al \in \bbP Q^{(j)}(ax,y)\end{subarray}}k_{a,\al}(\sum _{\begin{subarray}{c}d=cb\\ b\in I(i,j)\\ c\in I(j,j)\end{subarray}}\de_{1_j,c}\Ph ^{(j)}(\al )X(c)(\de_{a,b}\id _{X(a)(x)}))_{d\in I(i,j)}\\
                     &=&\sum _{\begin{subarray}{c}a\in I(i,j)\\ \al \in \bbP Q^{(j)}(ax,y)\end{subarray}}k_{a,\al}(\de_{a,d}\Ph ^{(j)}(\al )X(1_j)(\id _{X(a)(x)}))_{d\in I(i,j)}\\
                     &=&\sum _{\begin{subarray}{c}a\in I(i,j)\\ \al \in \bbP Q^{(j)}(ax,y)\end{subarray}}k_{a,\al}(\de_{a,d}\Ph ^{(j)}(\al ))_{d\in I(i,j)}\\
                     &=&\left(\Ph ^{(j)}\left(\sum _{\al \in \bbP Q^{(j)}(ax,y)}k_{d,\al}\al \right)\right)_{d\in I(i,j)}\\
                     &=&0
\end{eqnarray*}
Since $\al \in \calM _j(ax,y)$, we have $k_{d,\al}=0$.
Therefore $\calM$ is a basis of
$(\Bbbk Q'\slash \langle R'\rangle )({}_ix,{}_jy)$. 

Here we define $\si _a \colon X(j)(X(a)(x),y)\hookrightarrow \dis\bigoplus _{a\in I(i,j)}X(j)(X(a)(x),y)$ by $\mu \mapsto (\de_{b,a}\mu )_{b\in I(i,j)}$
for each $\mu \in X(j)(X(a)(x),y)$.
Then a basis of $\gr (X)((i,x),(j,y))$ is written by $\dis\bigcup _{a\in I(i,j)}\si_a(\Ph^{(j)}(\calM _j(ax,y)))$, and for each $\ol{\al \pi (a,{}_ix)}\in \calM$
we have
\begin{eqnarray*}
\bp (\ol{\al \pi (a,{}_ix)})&=&(\de_{a,d}\Ph ^{(j)}(\al ))_{d\in I(i,j)}\\
                                  &=&\si_a\Ph ^{(j)}(\al ).
\end{eqnarray*}
Hence $\bp$ induces an isomorphism $(\Bbbk Q'\slash \langle R'\rangle )({}_ix,{}_jy)\isoto \gr (X)((i,x),(j,y))$. 

Therefore $\bp$ is an isomorphism.
\end{proof}

\begin{rmk}
The description of the proof of Claim 5 in the proof of Theorem 8.1 in \cite{Asa11}
is not complete.  This corresponds to Claim 4 above, and the formula (8.4) in \cite{Asa11} should
be replaced by a linear combination
$$
\ovl{\et} = \sum t_{y, \al_s, \dots}\ovl{e_y\al_s\dots \al_1 (g_t, x_t)\dots (g_1, x_1)}
$$
with $t_{y, \al_s, \dots} \in \k$.
Correspondingly, we must remove ``$\ovl{\et} = $'' in the last formula in Claim 5 there.
The earlier version arXiv:0807.4706v6 of the paper records the correct proof.
\end{rmk}

\section{Examples}

In this section, we illustrate Theorems \ref{Gr-diag} and \ref{qv-pres-Gr} by some examples.

\begin{exm}
Let $Q$ be the quiver
\[\xymatrix{   &2& \\
                   1&3&5\\
                    &4& 
                   \ar ^a"2,1";"1,2"
                   \ar ^b"1,2";"2,3"
                   \ar ^c"2,1";"2,2"
                   \ar ^d"2,2";"2,3"
                   \ar _e"2,1";"3,2"
                   \ar _f"3,2";"2,3" }\]
and let $R=\{(ba,dc)\}$.
Then the category $I:=\ang{Q \mid R}$ is not given as a semigroup,
as a poset or as the free category of a quiver.
For any algebra $A$ consider the diagonal functor $\De(A) \colon I \to \kCat$.
Then by Theorem \ref{Gr-diag} the category $\Gr(\De(A))$ is given by
$$
AQ/\ang{ba-dc}.
$$

\begin{rmk}
Let $Q$ and $Q'$ be quivers having neither double arrows nor loops,
and let $f \colon Q_0 \to Q'_0$ be a map (a {\em vertex map} between $Q$ and $Q'$).
If $Q(x, y) \ne \emptyset$ ($x, y \in Q_0$) implies
$Q'(f(x), f(y)) \ne \emptyset$ or $f(x) = f(y)$, then
$f$ induces a $\k$-functor $\hat{f} \colon \k P \to \k P'$ defined by the following correspondence:
For each $x \in Q_0$, $\hat{f}(e_x):= e_{f(x)}$, and for each arrow $a \colon x \to y$ in $Q$,
$f(a)$ is the unique arrow $f(x) \to f(y)$ (resp.\ $e_{f(x)}$) if $f(x) \ne f(y)$ (resp.\ if $f(x) = f(y)$).
\end{rmk}

\begin{exm}
Let  $I=\langle Q\mid R\rangle$ be as in the previous example.
Define a functor $X:I\to \kCat$ by the $\k$-linearizations of
the following quivers in frames and
the $\k$-functors induced by the vertex maps expressed by broken arrows
between them:
\def\g{\save [].[d]!C*++[F-]\frm{} \restore}
\def\h{\save [].[]!C*++[F]\frm{} \restore}
\[
\xymatrix@R=25pt@C=50pt{  &  &  &\g 1 &  &  & \\
                              &  &  & 2 &  &  & \\
                            \g 1  &  &  &  &  &  & \g 1\\
                            2 &  &  & \h 1 &  &  & 2 \\
                             &  &  & \g 1 &  &  & \\
                              &  &  & 2 &  &  &
                            \ar _{\al }"3,1";"4,1"
                            \ar ^{\al }"1,4";"2,4"
                            \ar ^{\al }"5,4";"6,4"
                            \ar ^{\al }"3,7";"4,7"
                            \ar @/^/@{-->}^{X(a)}"3,1";"1,4"
                            \ar @/^/@{-->}^(0.7){X(a)}"4,1";"2,4"
                            \ar @/^/@{-->}^{X(c)}"3,1";"4,4"
                            \ar @{-->}^{X(c)}"4,1";"4,4"
                            \ar @/_/@{-->}_(0.6){X(e)}"3,1";"5,4"
                            \ar @/_/@{-->}_{X(e)}"4,1";"6,4"
                            \ar @/^/@{-->}^{X(b)}"1,4";"3,7"
                            \ar @/^/@{-->}_{X(b)}"2,4";"3,7"
                            \ar @{-->}^{X(d)}"4,4";"3,7"
                            \ar @/^/@{-->}_(0.4){X(f)}"5,4";"3,7"
                            \ar @/_/@{-->}_{X(f)}"6,4";"4,7"
     \save "2,4"+<0pt,-1cm>*\txt{$X(2)$}\restore
     \save "4,1"+<0pt,-1cm>*\txt{$X(1)$}\restore
     \save "4,4"+<0pt,1cm>*\txt{$X(3)$}\restore
     \save "4,7"+<0pt,-1cm>*\txt{$X(5)$}\restore
     \save "6,4"+<0pt,-1cm>*\txt{$X(4)$}\restore
                            }\]

Then by Theorem \ref{qv-pres-Gr} $\Gr (X)$ is presented by the quiver
\[ Q'=\left(
\vcenter{\xymatrix@R=25pt@C=50pt{  &  &  & {}_21 &  &  & \\
                              &  &  & {}_22 &  &  & \\
                            {}_11  &  &  &  &  &  & {}_51\\
                            {}_12 &  &  & {}_31 &  &  & {}_52 \\
                             &  &  & {}_41 &  &  & \\
                              &  &  & {}_42 &  &  &
                            \ar _{{}_1\al }"3,1";"4,1"
                            \ar ^{{}_2\al }"1,4";"2,4"
                            \ar ^{{}_4\al }"5,4";"6,4"
                            \ar ^{{}_5\al }"3,7";"4,7"
                            \ar @/^/@{-->}^{(a,{}_11)}"3,1";"1,4"
                            \ar @/^/@{-->}^(0.7){(a,{}_12)}"4,1";"2,4"
                            \ar @/^/@{-->}^{(c,{}_11)}"3,1";"4,4"
                            \ar @{-->}^{(c,{}_12)}"4,1";"4,4"
                            \ar @/_/@{-->}_(0.6){(e,{}_11)}"3,1";"5,4"
                            \ar @/_/@{-->}_{(e,{}_12)}"4,1";"6,4"
                            \ar @/^/@{-->}^{(b,{}_21)}"1,4";"3,7"
                            \ar @/^/@{-->}_{(b,{}_22)}"2,4";"3,7"
                            \ar @{-->}^{(d,{}_31)}"4,4";"3,7"
                            \ar @/^/@{-->}_(0.4){(f,{}_41)}"5,4";"3,7"
                            \ar @/_/@{-->}_{(f,{}_42)}"6,4";"4,7"
                            }}\right)\]
with relations
\begin{eqnarray*}
R'&=&\{ \pi(ba,{}_11)-\pi(dc,{}_11),\pi(ba,{}_12)-\pi(dc,{}_12)\}\\
   & &\cup \{ (a,{}_iy){}_i\al -{}_j(a\al )(a,{}_ix)\mid a:i\to j\in Q_1,\al :x\to y\in Q_1^{(i)}\},
\end{eqnarray*}
where the new arrows are presented by broken arrows.

\end{exm}

\end{exm}

\begin{exm}[Semigroup case]
Define a category $I=\ang{Q\mid R}$ by setting
\[Q=\left(\xymatrix{ 1 \ar @(ur,dr)^{g}}\right),\quad R=\{(g^2,g^3)\}.\]
Then $I$ can be regarded as a
semigroup with the presentation
$\langle g\mid g^2=g^3\rangle$.
We define a functor $X:G\rar \kCat$ as follows.
Let $Q^{(1)}$ be the quiver
$$
\xymatrix{1 \ar[r]^{\al} & 2 \ar[r]^\be & 3}.
$$
and set \( X(1):= \k Q^{(1)}\),
and define an endofunctor $X(g)$ of $X(1)$ as 
the $\k$-functor induced by the vertex map $X(g)(1)=2, X(g)(2)=3, X(g)(3)=3$.
Then by Theorem \ref{qv-pres-Gr} $\gr (X)$ is presented by the quiver
\[ Q'=(\xymatrix{ 1& 2 & 3
                   \ar @<1mm>^\al "1,1";"1,2"
                   \ar @<-1mm>@{.>}_{(g,1)}"1,1";"1,2"
                   \ar @<1mm>^\be "1,2";"1,3"
                   \ar @<-1mm>@{.>}_{(g,2)}"1,2";"1,3"
                   \ar @(ur,dr)@{.>}^{(g,3)}
                   })
\]
with relations
\begin{eqnarray*}
R'&=&\{ (g,3)(g,2)(g,1)-(g,2)(g,1),(g,3)(g,3)(g,2)-(g,3)(g,2),\\
   & &(g,3)(g,3)(g,3)-(g,3)(g,3),(g,2)\al -\be (g,1),(g,3)\be -(g,2)\}.
\end{eqnarray*}
\end{exm}

\begin{exm}
Let $Q=(\xymatrix{ 1 \ar[r]^a & 2 })$ and $I:= \ang{Q}$.
Define functors $X, X' \colon I \to \kCat$ by the $\k$-linearizations of the following
quivers in frames and the $\k$-functors induced by the vertex maps expressed by dotted arrows
between them:
\def\gv{\save [].[drr]!C*++[F-]\frm{} \restore}
\def\h{\save [].[]!C*++[F]\frm{} \restore}
$$
X:\qquad \vcenter{\xymatrix@R=35pt{ \gv 1 &   & 2 \\
                                        & 3 &  \\
                                        & \h 1 & 
                         \ar ^{\al }"1,1";"2,2"
                         \ar _{\be }"1,3";"2,2"
                         \ar @{.>}_{X(a)}"1,1";"3,2"
                         \ar @{.>}^{X(a)}"1,3";"3,2"
                         \ar @{.>}|{X(a)}"2,2";"3,2"
\save "1,3"+<1cm,-0.5cm>*\txt{$X(1)$}\restore
\save "3,2"+<1.1cm,0cm>*\txt{$X(2)$,}\restore
}}
\qquad
X':\qquad \vcenter{\xymatrix@R=35pt{  \gv & 1 &   \\
                                    2 &   & 3 \\
                                      &\h  1 &  
                         \ar _{\al }"1,2";"2,1"
                         \ar ^{\be }"1,2";"2,3"
                         \ar @{.>}|{X'(a)}"1,2";"3,2"
                         \ar @{.>}_{X'(a)}"2,1";"3,2"
                         \ar @{.>}^{X'(a)}"2,3";"3,2"
\save "1,3"+<1.1cm,-0.5cm>*\txt{$X'(1)$}\restore
\save "3,2"+<1.1cm,0cm>*\txt{$X'(2)$.}\restore
}}
$$
Then by Theorem \ref{qv-pres-Gr} $\gr (X)$ is given by the following quiver with no relations
\[ \left( \vcenter{\xymatrix{ {}_11 &   & {}_12 \\
                                        & {}_13 &  \\
                                        & {}_21 & 
                         \ar ^{{}_1\al }"1,1";"2,2"
                         \ar _{{}_1\be }"1,3";"2,2"
                         \ar @{.>}_{(a,{}_11)}"1,1";"3,2"
                         \ar @{.>}^{(a,{}_12)}"1,3";"3,2"
                         \ar @{.>}_{(a,{}_13)}"2,2";"3,2"
                         }},(a,{}_13){}_1\al -(a,{}_11),(a,{}_13){}_1\be -(a,{}_12)\right) 
\iso \left(
\vcenter{\xymatrix{ {}_11 &   & {}_12 \\
                                        & {}_13 &  \\
                                        & {}_21 & 
                         \ar^{{}_1\al }"1,1";"2,2"
                         \ar _{{}_1\be }"1,3";"2,2"
                         \ar_{(a,{}_13)}"2,2";"3,2"
                         }}
\right),          
\]
and $\gr (X')$ is given by the following quiver with a commutativity relation
\[\left( \vcenter{\xymatrix{   & {}_11 &   \\
                                    {}_12 &   & {}_13 \\
                                      & {}_21 &  
                         \ar _{{}_1\al }"1,2";"2,1"
                         \ar ^{{}_1\be }"1,2";"2,3"
                         \ar @{.>}_{(a,{}_11)}"1,2";"3,2"
                         \ar @{.>}_{(a,{}_12)}"2,1";"3,2"
                         \ar @{.>}^{(a,{}_13)}"2,3";"3,2"
                         }},(a,{}_12){}_1\al -(a,{}_11),(a,{}_13){}_1\be -(a,{}_11)\right)
\iso \left(
\vcenter{\xymatrix{   & {}_11 &   \\
                                    {}_12 &  \circlearrowright & {}_13 \\
                                      & {}_21 &  
                         \ar _{{}_1\al }"1,2";"2,1"
                         \ar ^{{}_1\be }"1,2";"2,3"
                         \ar @{.>}_{(a,{}_12)}"2,1";"3,2"
                         \ar @{.>}^{(a,{}_13)}"2,3";"3,2"
                         }}
\right).
\]
By using the main theorem in \cite{Asa-b} derived equivalences between $X(1)$ and $X'(1)$
and between $X(2)$ and $X'(2)$ are glued together to have a derived equivalence
between $\Gr(X)$ and $\Gr(X')$.                 
\end{exm}

\end{document}